\DeclarePairedDelimiter\floor{\lfloor}{\rfloor}
\theoremstyle{plain}
\newtheorem{thm}{Theorem}[section]
\newtheorem{pro}[thm]{Proposition}
\newtheorem{cor}[thm]{Corollary}
\theoremstyle{remark}
\newtheorem{rem}[thm]{Remark}
\newtheorem*{rem*}{Remark}
\theoremstyle{definition}
\numberwithin{equation}{section}
\newcommand{\N}{\mathbb{N}}
\newcommand{\ackn}{  \noindent{\sc Acknowledgement }\hspace{5pt} }
\renewcommand{\phi}{\varphi}
\begin{document}

\title[Uncountably many non-commensurable finitely presented pro-$p$ groups]{Uncountably many non-commensurable finitely presented pro-$p$ groups}

\thanks{This research was partially supported by CNPq}

\author{Ilir Snopce}
\address{Universidade Federal do Rio de Janeiro\\
  Instituto de Matem\'atica \\
  21941-909 Rio de Janeiro, RJ \\ Brasil }
\email{ilir@im.ufrj.br}

\begin{abstract}
 Let $m\geq 3$ be a positive integer.  We prove that there are uncountably many non-commensurable metabelian uniform pro-$p$ groups of dimension $m$. Consequently, there are uncountably many non-commensurable finitely presented pro-$p$ groups with minimal number of generators  $m$ (and minimal number of relations  $ {m \choose 2}$). 
\end{abstract}

\subjclass[2010]{20E18, 22E20}

\maketitle

\section{Introduction}
Throughout let $p$ be a prime.  In~\cite{La65}, Lazard gave a comprehensive treatment of the theory of $p$-adic analytic groups. Later Lubotzky and Mann re-interpreted the group-theoretic aspects of Lazard's work by introducing the concept of  powerful pro-$p$ group (see \cite{LuMa87}). A pro-$p$ group $G$ is said to be \emph{powerful} if $p \geq 3$ and
$[G,G]\leq G^p$, or $p=2$ and $[G,G]\leq G^4$.  Here, $[G,G]$ and
$G^p$ denote the (closures of the) commutator subgroup and the
subgroup generated by all $p$th powers. Using this terminology, Lazard's main result is the following algebraic characterisation of $p$-adic analytic groups: a topological group is $p$-adic analytic if
and only if it contains an open subgroup which is a finitely generated powerful pro-$p$ group.

\smallskip

 A powerful pro-$p$ group $G$ is called \emph{uniform} if it is finitely generated and torsion-free. A key feature of a uniform pro-$p$ group $G$ is that its minimal number of (topological) generators $d(G)$  coincides with the dimension of $G$ as a $p$-adic manifold. 

\smallskip
 
 In  this short note we prove the following.

\begin{thm}
\label{MainTheorem}
Let $m\geq 3$ be a positive integer.  There are uncountably many non-commensurable metabelian uniform pro-$p$ groups of dimension $m$. Consequently, there are uncountably many non-commensurable finitely presented pro-$p$ groups with minimal number of generators  $m$ (and minimal number of relations $
  {m \choose 2}$).
\end{thm}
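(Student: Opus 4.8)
The plan is to realize each group as a semidirect product and to use the eigenvalues of the action as a commensurability invariant. Fix, for $p$ odd, a topological generator $a$ of $1+p\Z_p$ (for $p=2$ replace $1+p\Z_p$ by $1+4\Z_2$), and fix rationals $s_3,\dots,s_{m-1}\in\Q\cap\Z_p$ that are pairwise distinct and different from $0$ and $1$. For $t\in\Z_p$ put $\alpha_t=\mathrm{diag}(a,a^{t},a^{s_3},\dots,a^{s_{m-1}})\in\GL_{m-1}(\Z_p)$, where $a^{x}:=\exp(x\log a)\in 1+p\Z_p$, and let $g$ act on $N:=\Z_p^{\,m-1}$ by $\alpha_t$, giving
\[
G_t \;=\; \Z_p^{\,m-1}\rtimes_{\alpha_t}\langle g\rangle,\qquad \langle g\rangle\cong\Z_p.
\]
Since $\alpha_t\equiv\id\pmod p$ one has $[G_t,G_t]=(\alpha_t-\id)N\subseteq pN\subseteq G_t^{\,p}$, so $G_t$ is powerful; it is finitely generated and torsion-free, hence uniform, of dimension $1+(m-1)=m$, and is metabelian. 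A Frattini computation gives $G_t/\Phi(G_t)\cong\F_p^{\,m}$, so $d(G_t)=m$; being uniform of dimension $m$ its mod-$p$ cohomology is the exterior algebra $\Lambda^{*}(\F_p^{m})$, so $G_t$ is finitely presented with $d(G_t)=m$ and relation rank $\dim_{\F_p}H^2(G_t,\F_p)=\binom{m}{2}$, both minimal. Thus the second assertion follows at once from the first.

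The invariant I would use is the multiset of \emph{ratios} of the eigenvalues of $\log\alpha_t$, i.e.\ the projective point $[1:t:s_3:\cdots:s_{m-1}]$ taken up to the $S_{m-1}$-action permuting coordinates, and I would show it is a commensurability invariant in two steps. First, every open $H\le G_t$ has the form $N'\rtimes\langle g^{p^{s}}n_0\rangle$ with $N'\le N$ of finite index and $g^{p^s}n_0$ acting on $N'$ as $\alpha_t^{\,p^{s}}$; since $N'\otimes\Q_p=N\otimes\Q_p$, the action on $H$ has eigenvalues $(\text{eig }\alpha_t)^{p^{s}}$, so the $\log$-eigenvalues are merely scaled by $p^{s}$ and the ratios are unchanged.

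Second, for an isomorphism between two such semidirect products $H_1,H_2$, the derived subgroup $M_i=[H_i,H_i]$ is characteristic, abelian, and open in the corresponding $N$; the conjugation action factors through $H_i/M_i$, whose torsion-free rank is $1$, so the image of $H_i$ in $\GL(M_i)$ is the pro-cyclic group generated by $A_i:=\alpha_i|_{M_i}$, which has the same eigenvalues as $\alpha_i$. An isomorphism conjugates $\langle A_1\rangle$ onto $\langle A_2\rangle$, whence $cA_1c^{-1}=A_2^{\,u}$ for some $c\in\GL_{m-1}(\Z_p)$ and $u\in\Z_p^\times$; taking logarithms, $\log A_1$ and $u\log A_2$ are conjugate, so their eigenvalue multisets agree up to the scalar $u$. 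Combining the two steps, commensurable groups have equal ratio-invariants, the total scalar ambiguity lying in $\Q_p^\times$.

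Finally I would deduce uncountability by a fiber count. If $G_t$ and $G_{t'}$ are commensurable then $[1:t:s_\bullet]$ and $[1:t':s_\bullet]$ lie in one $S_{m-1}$-orbit up to a scalar in $\Q_p^\times$; for each of the $(m-1)!$ permutations the scalar is pinned down by matching one of the coordinates $1,s_3,\dots,s_{m-1}$, after which $t$ is determined, so the commensurability class of $G_{t}$ contains at most $(m-1)!$ of the parameters (for $m=3$ the invariant is simply $\{t,t^{-1}\}$). As $t$ ranges over the uncountable set $\Z_p$, this yields uncountably many classes. I expect the main obstacle to be the second step: showing that an abstract isomorphism of the semidirect products respects the abelian normal part closely enough to match eigenvalues, i.e.\ that $M_i$ is characteristic and that the induced image in $\GL_{m-1}(\Z_p)$ is pro-cyclic generated by the restricted action. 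The care needed is in handling the torsion of $H_i^{\mathrm{ab}}$ and in controlling the ambiguities ($u\in\Z_p^\times$ and the $S_{m-1}$-action) so that they only ever alter the eigenvalue tuple by a global scalar and a permutation, leaving the ratios—and hence the invariant—intact.
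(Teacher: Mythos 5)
Your proposal is correct in outline and takes a genuinely different route from the paper. The paper works entirely on the Lie algebra side: it defines metabelian $\mathbb{Z}_p$-Lie algebras $L_m(d)$ (an abelian ideal of rank $m-1$ plus one element $x$ acting by a companion-like matrix with trace $1$ and determinant $\pm d$), shows by pure linear algebra that $(\mathrm{tr}\,A)^{-(m-1)}\det A$ is an isomorphism invariant of $L_m(d)\otimes_{\mathbb{Z}_p}\mathbb{Q}_p$ --- the trace normalization absorbing exactly the unit ambiguity that you absorb by passing to eigenvalue \emph{ratios} --- and then transports everything to uniform groups via the equivalence between uniform pro-$p$ groups and powerful $\mathbb{Z}_p$-Lie algebras (DDMS, Theorems 9.8 and 9.10). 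Non-commensurability is then immediate because the $\mathbb{Q}_p$-Lie algebra is a commensurability invariant of a $p$-adic analytic group, so the paper never has to analyze open subgroups at all. You instead build the groups directly as $\mathbb{Z}_p^{m-1}\rtimes_{\alpha_t}\mathbb{Z}_p$ with diagonal action and prove commensurability invariance by hand: classify open subgroups as $N'\rtimes\overline{\langle g^{p^s}n_0\rangle}$, identify $M=[H,H]$ as a characteristic open submodule of $N$, show the conjugation image in $\GL(M)$ is procyclic, and deduce $cA_1c^{-1}=A_2^u$ with $u\in\mathbb{Z}_p^\times$, hence equality of log-eigenvalue ratios. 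This costs you the delicate step you flag (where, note, the cleanest justification is that the action factors through $H/N'$ with $N'=H\cap N$, since all of $N'$ --- not just $M$ --- acts trivially on $M$ by abelianness of $N$; as written, your ``factors through $H/M$, torsion-free rank $1$'' argument additionally needs torsion-freeness of the image, e.g.\ via the congruence subgroup $1+p\mathrm{M}_{m-1}(\mathbb{Z}_p)$), but it buys a self-contained argument with explicit matrix groups that never invokes the exp/log correspondence. Two small patches are needed: exclude $t=0$, since otherwise $\alpha_t$ has eigenvalue $1$, $[H,H]$ is not open in $N$, and your invariant degenerates (harmless for the count, as it removes one parameter value); and your relation-rank claim via $H^2(G,\mathbb{F}_p)\cong\Lambda^2(\mathbb{F}_p^m)$ is Lazard's theorem, where the paper instead cites DDMS Theorem 4.35 --- both are legitimate, though for $p=2$ one should check the version of that cohomological statement valid for uniform pro-$2$ groups.
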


As a direct consequence we get the following.

\begin{cor}\label{MainCorollary}
Let $m\geq 3$ be a positive integer. There are uncountably many non-commensurable finitely presented pro-$p$ groups $G$ with $d(G)=m$ which are not a pro-$p$ completion of any finitely presented abstract group. In particular, these groups do not have a finite presentation with all the relations of finite length (as words on the generators).
\end{cor}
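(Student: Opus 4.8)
The plan is to deduce Corollary~\ref{MainCorollary} from Theorem~\ref{MainTheorem} by a cardinality argument. By Theorem~\ref{MainTheorem} there is an uncountable family $\{G_i : i \in I\}$ of pairwise non-commensurable finitely presented pro-$p$ groups, each with $d(G_i)=m$; in particular the $G_i$ are pairwise non-isomorphic as topological groups, since isomorphic groups are commensurable. The crucial observation is that only countably many of them can arise as a pro-$p$ completion of a finitely presented abstract group.

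To establish this, I would first note that there are only countably many isomorphism types of finitely presented abstract groups: each is given by a presentation with finitely many generators and finitely many defining relators, every relator a finite word, and there are only countably many such presentations. Since the pro-$p$ completion of an abstract group always exists and is determined by the group, the collection of pro-$p$ groups occurring as the pro-$p$ completion of some finitely presented abstract group is countable. Because the $G_i$ are pairwise non-isomorphic, the set of indices $i$ for which $G_i$ is such a completion injects into this countable collection, hence is countable. Discarding this countable exceptional set leaves an uncountable subfamily, which is still pairwise non-commensurable and consists of finitely presented pro-$p$ groups with $d(G_i)=m$; this gives the first assertion.

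For the final clause I would argue by contraposition. Suppose a pro-$p$ group $G$ admitted a finite presentation $\langle x_1,\dots,x_m \mid r_1,\dots,r_k\rangle$ in the category of pro-$p$ groups in which every relator $r_j$ is a word of finite length in the generators. Let $\Gamma$ be the abstract group defined by the same presentation; then $\Gamma$ is finitely presented. Since the pro-$p$ completion of the free abstract group on $x_1,\dots,x_m$ is the free pro-$p$ group on the same set, and completion carries the abstract normal closure of $r_1,\dots,r_k$ onto its closed normal closure, the pro-$p$ completion of $\Gamma$ is isomorphic to $G$. Thus any group in our uncountable subfamily that is not the pro-$p$ completion of a finitely presented abstract group can have no finite presentation with relators of finite length, which is exactly the ``in particular'' statement.

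The argument is short and presents no genuine obstacle beyond Theorem~\ref{MainTheorem} itself; the only points needing care are the countability of finitely presented abstract groups up to isomorphism and the standard identification of a finite pro-$p$ presentation with finite-length relators as the pro-$p$ completion of the corresponding abstract group.
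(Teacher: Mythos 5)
Your proof is correct and is exactly the argument the paper intends: the paper states Corollary~\ref{MainCorollary} as a ``direct consequence'' of Theorem~\ref{MainTheorem} without written details, the implicit reasoning being precisely your countability argument (only countably many finitely presented abstract groups, hence only countably many pro-$p$ completions of such) together with the standard fact that a pro-$p$ group with a finite presentation whose relators have finite length is the pro-$p$ completion of the abstract group on the same presentation. Both of the points you flag as needing care are handled correctly, so there is nothing to add.
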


Let $\Gamma$ be a finitely generated abstract group or a finitely generated profinite group, and for $n\in \mathbb{N}$, let $\mathit{a}_n(\Gamma)$ denote the number of subgroups of index $n$ in $\Gamma$. The zeta function of  $\Gamma$ is given by the Dirichlet series
\begin{displaymath}
\zeta_{\Gamma}(s)=\sum_{n=1}^{\infty}\mathit{a}_n(\Gamma) n^{-s}.
\end{displaymath}
Similarly, the normal zeta function of $\Gamma$  is defined by
\begin{displaymath}
\zeta_{\Gamma}^\triangleleft(s)=\sum_{n=1}^{\infty}\mathit{a}_{n}^\triangleleft(\Gamma) n^{-s},
\end{displaymath}
where $\mathit{a}_{n}^\triangleleft(\Gamma)$ denotes the number of normal subgroups of index $n$ in $\Gamma$.

We say that two groups $H$ and $K$ are \emph{isospectral} (\emph{normally isospectral}) if their zeta functions (normal zeta functions) are the same.  In \cite{Snopceil}  is given an example of an infinite family of non-commensurable normally isospectral pro-$p$ groups.

 Du Sautoy has proved that if $G$ is a compact $p$-adic analytic group then  $\zeta_G(s)$ and $\zeta_G^\triangleleft(s)$ are  rational functions over $\mathbb{Q}$ of $p^{-s}$ (see \cite{Sautoypaper}). This result together with Theorem \ref{MainTheorem} imply the following.
 
 \begin{cor}\label{Zeta}
There are uncountably many non-commensurable isospectral (normally isospectral) pro-$p$ groups.
\end{cor}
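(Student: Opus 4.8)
The plan is to combine Theorem \ref{MainTheorem} with du Sautoy's rationality result through a cardinality argument, so that essentially all the substantive work has already been done. First I would fix some $m \geq 3$ and take the uncountable family $\{G_i\}_{i \in I}$ of pairwise non-commensurable metabelian uniform pro-$p$ groups of dimension $m$ supplied by Theorem \ref{MainTheorem}. Since every uniform pro-$p$ group is by definition a finitely generated powerful pro-$p$ group, each $G_i$ is a compact $p$-adic analytic group by Lazard's characterisation recalled above. Hence du Sautoy's theorem applies to every $G_i$, and tells us that both $\zeta_{G_i}(s)$ and $\zeta_{G_i}^\triangleleft(s)$ are rational functions over $\mathbb{Q}$ in the variable $x = p^{-s}$.

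Next I would record the countability input. The polynomial ring $\mathbb{Q}[x]$ is countable, and every rational function over $\mathbb{Q}$ is a quotient of two such polynomials, so the field $\mathbb{Q}(x)$ of rational functions over $\mathbb{Q}$ in one variable is a countable set. Consequently the set of all possible pairs $(\zeta_G(s), \zeta_G^\triangleleft(s))$ arising from groups in our family is contained in the countable set $\mathbb{Q}(x) \times \mathbb{Q}(x)$.

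The conclusion is then a pigeonhole argument applied to the assignment $i \mapsto (\zeta_{G_i}(s), \zeta_{G_i}^\triangleleft(s))$, viewed as a map from the uncountable index set $I$ into the countable set $\mathbb{Q}(x) \times \mathbb{Q}(x)$. Since a map from an uncountable set to a countable set must have an uncountable fibre, there is an uncountable subset $J \subseteq I$ on which both zeta functions are constant. The groups $\{G_i\}_{i \in J}$ are pairwise non-commensurable, being a subfamily of the original family, and they are simultaneously isospectral and normally isospectral by construction, which is exactly the assertion of the corollary.

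I do not expect any genuine obstacle in this argument, since the two hard ingredients—producing uncountably many pairwise non-commensurable examples, and forcing their zeta functions into a countable set—are precisely Theorem \ref{MainTheorem} and du Sautoy's rationality theorem. The only point worth stating carefully is that non-commensurability is a property of pairs of groups, so it is inherited by any subfamily; thus restricting from the index set $I$ to the uncountable fibre $J$ preserves pairwise non-commensurability, and no separate verification is needed for the groups singled out by the pigeonhole step.
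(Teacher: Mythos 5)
Your proposal is correct and is exactly the argument the paper intends: the paper derives Corollary \ref{Zeta} by combining Theorem \ref{MainTheorem} with du Sautoy's rationality theorem, and the countability-of-$\mathbb{Q}(p^{-s})$ pigeonhole step you spell out is precisely the implicit content of that one-line deduction. Your version is simply a more explicit write-up, including the (correct) observation that passing to the uncountable fibre preserves pairwise non-commensurability.
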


The question of whether there are uncountably many non-isomorphic finitely presented pro-$p$ groups was recently explicitly raised by A. Lubotzky at the conference ``Geometric and Combinatorial Group Theory" in honor of E. Rips and by E. Zelmanov at the conference ``XX Coloquio Latinoamericano de \'{A}lgebra" (Zelmanov attributed the question to Lubotzky) . I am grateful to E. Zelmanov and R. Grigorchuk for encouraging me to write down this paper.

\section{Powerful pro-$p$ groups and Lie algebras}
Let $G$ be a pro-$p$ group.  The lower central
$p$-series of $G$ is defined as follows: $P_1(G)=G$ and $P_{i+1}(G) =
{P_i(G)}^p [P_i(G), G]$ for $i \in \N$. 
One can use this series to define uniform pro-$p$ groups. Indeed, a pro-$p$ group is uniform if and only if it is finitely generated, powerful and $\lvert P_i(G) : P_{i+1}(G) \rvert = \lvert G : P_2(G)
    \rvert$ for all $i \in \N$. This definition of a uniform pro-$p$ group is equivalent to the definition given in the introduction (see \cite[Theorem 4.5]{DidSMaSe99}).
    
     Given a powerful pro-$p$ group $G$ and $n\in \mathbb{N}$, we have $P_{n+1}(G)=G^{p^n}=\{ x^{p^n} ~|~ x\in G \}$ (see \cite[Theorem 3.6]{DidSMaSe99}). Moreover, if $G$ is uniform, then the mapping $ x \mapsto x^{p^n}$ is a homeomorphism from $G$ onto $G^{p^n}$ (see \cite[Lemma 4.10]{DidSMaSe99}). This shows that each element $x\in G^{p^n}$ admits a unique $p^n$th root in $G$, which we denote by $x^{p^{-n}}$. 
    
    Analogous to the case of pro-$p$ groups, a $\mathbb{Z}_p$-Lie algebra $L$ is called \emph{powerful} if $L\cong \mathbb{Z}_p^d$ for some $d>0$ as $\mathbb{Z}_p$-module and $(L,L)_{Lie}\subseteq pL$ ( $(L,L)_{Lie} \subseteq 4L$ if $p=2$).

 If $G$ is an analytic pro-$p$ group, then it has a characteristic open subgroup which is uniform. For every open uniform subgroup $H \leq G$, $\mathbb{Q}_p[H]$ can be made into a normed $\mathbb{Q}_p$-algebra, call it $\mathit{A}$, and $log(H)$, considered as a subset of the completion $\hat{A}$ of $A$, will have the structure of a Lie algebra over $\mathbb{Z}_p$. 
There is a different construction of an intrinsic Lie algebra over $\mathbb{Z}_p$ for uniform groups. The uniform group $U$ and its Lie algebra over $\mathbb{Z}_p$, call it $L_U$, are identified as sets, and the Lie operations are defined by
\begin{displaymath}
g+h=\lim_{n \to \infty}(g^{p^n}h^{p^n})^{p^{-n}},  ~ ~ ~  (g,h)_{Lie}=\lim_{n \to \infty}[g^{p^n},h^{p^n}]^{p^{-2n}}=\lim_{n\to \infty} (g^{-p^n}h^{-p^n}g^{p^n}h^{p^n})^{p^{-2n}} .
\end{displaymath} It turns out that $L_U$ is a powerful $\mathbb{Z}_p$-Lie algebra and it is isomorphic to the $\mathbb{Z}_p$-Lie algebra $log(U)$.
On the other hand, if $L$ is a powerful $\mathbb{Z}_p$-Lie algebra, then the Campbell-Hausdorff formula induces a group structure on $L$; the resulting group is a uniform pro-$p$ group. If this construction is applied to the $\mathbb{Z}_p$-Lie algebra $L_U$ associated to a uniform group $U$, one recovers the original group. Indeed, the assignment $U\mapsto L_U$ gives an isomorphism between the category of uniform pro-$p$ groups and the category of powerful $\mathbb{Z}_p$-Lie algebras (see \cite[Theorem 9.10]{DidSMaSe99}).  A
detailed treatment of $p$-adic analytic groups is given in~\cite{DidSMaSe99}.

\section{Proof of Theorem 1.1}

Let us denote by $\mathbb{Z}_p^{*}$ the group of units of the $p$-adic integers, i.e., $\mathbb{Z}_p^{*} = \mathbb{Z}_p \setminus p\mathbb{Z}_p$.  

\begin{pro}\label{LieAlgebras}
Let $d\in \mathbb{Z}_p^{*}$ and for $n\geq 1$, let $L_{2n+1}(d)$ and $L_{2n+2}(d)$ be $\mathbb{Z}_p$-Lie algebras  defined in the following way.
\begin{itemize}
\item [(i)] $L_{2n+1}(d)$ is the free  $\mathbb{Z}_p$-module on the basis $\{x, e_2, ..., e_{2n+1}\}$ and the  Lie bracket is given as follows:
 \begin{displaymath}
  [e_i,e_j]=0 \textrm{ for } 2\leq i, j \leq 2n+1, ~ [e_2, x]=de_{2n+1}, 
 \end{displaymath}
 \begin{displaymath}
  [e_i, x]=e_{2n-i+3} \textrm{ for } 3\leq i \leq 2n \textrm{ and } [e_{2n+1}, x] = e_2 +e_{2n+1}. 
 \end{displaymath}
\item [(ii)] $L_{2n+2}(d)$ is the free  $\mathbb{Z}_p$-module on the basis $\{x, e_2, ..., e_{2n+2}\}$ and the Lie  bracket is given as follows: 
 \begin{displaymath}
  [e_i,e_j]=0 \textrm{ for } 2\leq i, j \leq 2n+2, ~ [e_2, x]=de_{2n+2},
 \end{displaymath}
 \begin{displaymath}
   \textrm{ and } [e_i, x]=e_{2n-i+4} \textrm{ for } 3\leq i \leq 2n+2. 
 \end{displaymath}
\end{itemize}
Then $L_k(d) \cong L_k(l)$ if and only if $d=l$. Moreover, $d$ is an invariant of the isomorphism type of the $\mathbb{Q}_p$-Lie algebra $L_k(d)\otimes_{\mathbb{Z}_p}\mathbb{Q}_p$.
\end{pro}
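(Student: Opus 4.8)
The plan is to exploit that each $L_k(d)$ is metabelian, with $A := [L_k(d),L_k(d)]$ an abelian ideal on which the action of (a lift of) the generator $x$ is encoded by a single linear endomorphism whose conjugacy class, up to a scalar coming from the choice of lift, is an isomorphism invariant. First I would observe that, since $d$ is a unit (more generally $d\neq 0$ suffices), the listed brackets force $[L_k(d),L_k(d)]=A$ to be exactly the submodule spanned by the $e_i$, and that $A$ is abelian. Thus $A$ is the derived subalgebra, hence preserved by every isomorphism, and $L_k(d)/A$ is free of rank $1$. Consequently any isomorphism $\phi\colon L_k(d)\to L_k(l)$ carries $A_d$ onto $A_l$ and induces an isomorphism of the rank-one quotients, so $\phi(x_d)=u\,x_l+a$ with $u\in\mathbb{Z}_p^{*}$ and $a\in A_l$ (the ``if'' direction being trivial).

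Next I would record the linear algebra. Writing $T_d\colon A_d\to A_d$ for the map $v\mapsto[v,x_d]$, compatibility of $\phi$ with the bracket, together with $A_l$ being abelian (so $[\phi(v),a]=0$), yields $\phi\circ T_d=u\cdot(T_l\circ\phi)$, i.e. $\phi T_d\phi^{-1}=u\,T_l$. Hence $T_d$ and $u\,T_l$ are conjugate and share characteristic polynomial and trace. The key computation is that $T_d$ is block-diagonal: a single $2\times2$ block $\left(\begin{smallmatrix}0&1\\d&1\end{smallmatrix}\right)$ on $\langle e_2,e_{2n+1}\rangle$ in the odd case (resp.\ $\left(\begin{smallmatrix}0&1\\d&0\end{smallmatrix}\right)$ on $\langle e_2,e_{2n+2}\rangle$ in the even case), together with $n-1$ sign-type blocks $\left(\begin{smallmatrix}0&1\\1&0\end{smallmatrix}\right)$ arising from the index-reversing involution that pairs the remaining basis vectors (with one extra fixed vector $e_{n+2}$ of eigenvalue $1$ in the even case). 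Reading this off gives
\[
\chi_{T_d}(t)=(t^2-t-d)(t^2-1)^{n-1}\ \ (\text{odd}),\qquad \chi_{T_d}(t)=(t^2-d)(t-1)(t^2-1)^{n-1}\ \ (\text{even}),
\]
and in both cases the crucial point is that $\operatorname{tr}(T_d)=1$, independently of $d$.

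The finish is then immediate: from $\operatorname{tr}(T_d)=\operatorname{tr}(u\,T_l)=u\operatorname{tr}(T_l)$ and $\operatorname{tr}(T_d)=\operatorname{tr}(T_l)=1\neq0$ I get $u=1$, whence $\chi_{T_d}=\chi_{T_l}$; cancelling the common factor $(t^2-1)^{n-1}$ (and $(t-1)$ in the even case) in the polynomial ring leaves $t^2-t-d=t^2-t-l$ (resp.\ $t^2-d=t^2-l$), so $d=l$. Since this reasoning is purely about the derived subalgebra and the induced action, it applies verbatim over $\mathbb{Q}_p$: there $u$ is a priori only constrained to lie in $\mathbb{Q}_p^{*}$, but the trace identity still forces $u=1$, so the same conclusion holds. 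This simultaneously establishes the isomorphism statement over $\mathbb{Z}_p$ and the ``moreover'' assertion that $d$ is an invariant of $L_k(d)\otimes_{\mathbb{Z}_p}\mathbb{Q}_p$. The one genuinely delicate point, and the crux of the design of these algebras, is the normalization $\operatorname{tr}(T_d)=1$: it is exactly what removes the scaling ambiguity $u$ coming from the choice of a lift of the generator, and without it one would instead have to disentangle the messier identity $(t^2-t-d)(t^2-1)^{n-1}=(t^2-ut-lu^2)(t^2-u^2)^{n-1}$ by comparing factorizations.
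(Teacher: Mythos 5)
Your proposal is correct and takes essentially the same approach as the paper: both identify the derived subalgebra as the abelian span of the $e_i$, observe that any isomorphism must send $x$ to $ux+a$ with $u$ a unit (so the adjoint action on $L'$ is determined up to the scalar $u$), and exploit the normalization $\mathrm{tr}=1$ to eliminate $u$. The only cosmetic difference is that you finish by comparing characteristic polynomials, while the paper packages the same information as the single conjugation invariant $(\mathrm{tr}\,A(y))^{-(k-1)}\det A(y)=(-1)^{\floor{\frac{k-1}{2}}}d$.
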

\begin{proof}
Let $k \geq 3$ and let $L=L_k(d)$. It is easy to see that $L$ is a well-defined $\mathbb{Z}_p$-Lie algebra and  that $L'=[L,L]$ is an abelian ideal generated by $e_2, e_3,..., e_k$. Let $A(x)$ denote the restriction of $\textrm{ad}x$ to $L'$ and let $A_k(d)$ be the matrix associated to this linear transformation with respect to the basis $e_2,...,e_k$. If $k=2n+1$ we have the following ${2n}\times {2n}$ matrix
\begin{displaymath}
A_{2n+1}(d)=
\begin{pmatrix} 0 & \cdots & 0 & 0 & d\\ 0 & \cdots & 0 & 1 & 0\\ \vdots & \ddots & \ddots & \ddots & \vdots \\ 0 & 1 & 0 & \cdots & 0\\ 1 & 0 & \cdots & 0 & 1 \end{pmatrix}  
\end{displaymath}
and if $k=2n+2$ we have the following $(2n+1) \times (2n+1)$ matrix
\begin{displaymath}
A_{2n+2}(d)=
\begin{pmatrix} 0 & \cdots & 0 & 0 & d\\ 0 & \cdots & 0 & 1 & 0\\ \vdots & \ddots & \ddots & \ddots & \vdots \\ 0 & 1 & 0 & \cdots & 0\\ 1 & 0 & \cdots & 0 & 0 \end{pmatrix}. 
\end{displaymath}
Note that $\textrm{tr}(A_k(d))=1$ for all $k\geq 3$. Moreover, $\textrm{det}(A_{2n+1}(d))={(-1)}^nd$ and $\textrm{det}(A_{2n+2}(d))={(-1)}^nd$ for all $n\geq 1$ (i.e., $\textrm{det}(A_k(d))={(-1)}^{\floor{\frac{k-1}{2}}}d$ for $k\geq 3$).\\
 Also note that $L=L'\oplus x\mathbb{Z}_p$. Moreover, if also  $L=L'\oplus y\mathbb{Z}_p$ for some $y\in L$, then $y=ux + e$ with $u \in \mathbb{Z}_p^{*}$ and $e\in  L'$, and
\begin{displaymath}
[e_i,y]=[e_i,ux+e]=u[e_i,x]+[e_i,e]=u[e_i,x].
\end{displaymath} 
Thus $A(y) =\textrm{ad}y_{|L'} = uA(x)$, so $\textrm{tr}A(y) = u\textrm{tr}A(x)$ and $\textrm{det} A(y) = u^{k-1} \textrm{det}A(x)$, where $k-1 = \textrm{dim}L'$. Since $\textrm{tr}A(x) = \textrm{tr}(A_k(d))=1$, it follows that $${(\textrm{tr}A(y))}^{-(k-1)}\cdot \textrm{det} A(y) = {u}^{1-k}\cdot \textrm{det} A(y) = \textrm{det}A(x) = {(-1)}^{\floor{\frac{k-1}{2}}}d$$ is an invariant of $L = L_k(d)$. Thus $L_k(d) \cong L_k(l)$ if and only if $d=l$.

Finally, note that our proof works equally well with $\mathbb{Q}_p$ in place of $\mathbb{Z}_p$. Thus  $d$ is an invariant of the isomorphism type of the $\mathbb{Q}_p$-Lie algebra $L_k(d)\otimes_{\mathbb{Z}_p}\mathbb{Q}_p$.
\end{proof}

\begin{cor}\label{PowerfulAlgebra}
Let $k\geq 3$ and let $d, l \in \mathbb{Z}_p^{*}$. The $\mathbb{Z}_p$-Lie algebra $p^2L_k(d)$ is powerful and $p^2L_k(d)\otimes_{\mathbb{Z}_p}\mathbb{Q}_p\cong p^2L_k(l)\otimes_{\mathbb{Z}_p}\mathbb{Q}_p$ if and only if $d=l$. In particular, there are uncountably many  non-isomorphic (powerful) $\mathbb{Z}_p$-Lie algebras of rank $k$. 
\end{cor}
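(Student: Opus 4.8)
The plan is to read everything off Proposition~\ref{LieAlgebras}, the only genuinely new point being a routine verification of the powerful condition. Throughout write $L=L_k(d)$ and $L'=[L,L]$, and set $M:=p^2L$. First I would check that $M$ is a powerful $\mathbb{Z}_p$-Lie algebra. Since $L$ is free of rank $k$ over $\mathbb{Z}_p$, so is the scaled lattice $M\cong\mathbb{Z}_p^k$. For the bracket condition I compute
\[
[M,M]=[p^2L,p^2L]=p^4[L,L]=p^4L'\subseteq p^4L .
\]
When $p$ is odd, $p^4L\subseteq p^3L=p\,(p^2L)=pM$; when $p=2$, $p^4L=16L=4\,(p^2L)=4M$. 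In either case the defining inequality for powerfulness holds, and in particular $[M,M]\subseteq M$, so $M$ is indeed a Lie subalgebra. The factor $p^2$ rather than $p$ is exactly what makes the computation uniform in $p$ and, in particular, covers the prime $2$.

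Second I would reduce the $\mathbb{Q}_p$-statement to the last assertion of Proposition~\ref{LieAlgebras}. The inclusion $p^2L\hookrightarrow L$ is a homomorphism of $\mathbb{Z}_p$-Lie algebras whose cokernel $L/p^2L\cong(\mathbb{Z}/p^2\mathbb{Z})^k$ is finite, hence torsion. Since $\mathbb{Q}_p$ is flat over $\mathbb{Z}_p$, tensoring kills the cokernel and turns this inclusion into an isomorphism of $\mathbb{Q}_p$-Lie algebras,
\[
p^2L_k(d)\otimes_{\mathbb{Z}_p}\mathbb{Q}_p\;\cong\;L_k(d)\otimes_{\mathbb{Z}_p}\mathbb{Q}_p .
\]
Applying this for $d$ and for $l$ and invoking the fact (from Proposition~\ref{LieAlgebras}) that $d$ is an invariant of $L_k(d)\otimes\mathbb{Q}_p$, I conclude that $p^2L_k(d)\otimes\mathbb{Q}_p\cong p^2L_k(l)\otimes\mathbb{Q}_p$ if and only if $d=l$.

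Finally the cardinality statement follows formally: any isomorphism $p^2L_k(d)\cong p^2L_k(l)$ of $\mathbb{Z}_p$-Lie algebras induces, after $\otimes_{\mathbb{Z}_p}\mathbb{Q}_p$, an isomorphism of the associated $\mathbb{Q}_p$-Lie algebras, so $d=l$ by the previous step. Hence $d\mapsto p^2L_k(d)$ injects the uncountable set $\mathbb{Z}_p^{*}$ into the set of isomorphism classes of powerful $\mathbb{Z}_p$-Lie algebras of rank $k$, giving uncountably many non-isomorphic such algebras. I do not expect any real obstacle here: the content lies entirely in Proposition~\ref{LieAlgebras}, and the corollary amounts to observing that rescaling a lattice by $p^2$ preserves both powerfulness (with the case $p=2$ the only thing to watch) and the underlying $\mathbb{Q}_p$-Lie algebra.
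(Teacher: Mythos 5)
Your proposal is correct and follows essentially the same route as the paper: verify powerfulness by the computation $[p^2L,p^2L]=p^4[L,L]\subseteq p^2(p^2L)$, observe that rescaling by $p^2$ does not change the $\mathbb{Q}_p$-Lie algebra, and then invoke the invariance of $d$ from Proposition~\ref{LieAlgebras} together with the uncountability of $\mathbb{Z}_p^{*}$. The only cosmetic difference is that you justify $p^2L_k(d)\otimes_{\mathbb{Z}_p}\mathbb{Q}_p\cong L_k(d)\otimes_{\mathbb{Z}_p}\mathbb{Q}_p$ via flatness and a torsion cokernel, where the paper simply matches bases under multiplication by $p^2$.
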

\begin{proof}
We have $[p^2L_k(d),p^2L_k(d)]=p^4[L_k(d),L_k(d)]\subseteq p^2(p^2L_k(d))$. Hence, $p^2L_k(d)$ is a powerful $\mathbb{Z}_p$-Lie algebra. Now note that $f_1, ..., f_k$ is a basis of $L_k(d)\otimes_{\mathbb{Z}_p}\mathbb{Q}_p$ if and only if $p^2f_1,...,p^2f_k$ is a basis of $p^2L_k(d)\otimes_{\mathbb{Z}_p}\mathbb{Q}_p$. Hence, $p^2L_k(d)\otimes_{\mathbb{Z}_p}\mathbb{Q}_p \cong p^2L_k(l)\otimes_{\mathbb{Z}_p}\mathbb{Q}_p$ if and only if $L_k(d)\otimes_{\mathbb{Z}_p}\mathbb{Q}_p \cong L_k(l)\otimes_{\mathbb{Z}_p}\mathbb{Q}_p$, and by Proposition \ref{LieAlgebras}, if and only if $d=l$. The last part of the corollary follows directly from the fact that $\mathbb{Z}_p^{*} = \mathbb{Z}_p \setminus p\mathbb{Z}_p$ is an uncountable set.  
\end{proof}
We need the following well-known proposition.
\begin{pro}[{\cite[Proposition 4.32]{DidSMaSe99}}]\label{FinitelyPresented}
Let $G$ be a uniform pro-$p$ group of dimension $d=d(G)$ and let $X=\{x_1,..., x_d  \}$ be a topological generating set for $G$. Then $G$ has a presentation $\langle X; R \rangle$, where 
\begin{displaymath}
R= \{[x_i,x_j]x_1^{a_1(i,j)}\cdots x_d^{a_d(i,j)} ~ | ~ 1\leq i,j \leq d   \},
\end{displaymath} 
and for each $m, i$ and $j$, $a_m(i,j) \in p\mathbb{Z}_p$ if $p$ is odd, $a_m(i,j) \in 4\mathbb{Z}_2$ if $p=2$. In particular, $G$ is finitely presented.
\end{pro}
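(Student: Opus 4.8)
The plan is to realise $G$ as the pro-$p$ group abstractly presented by $\langle X;R\rangle$: first I would check that the listed relations genuinely hold in $G$ (so that the presentation at least \emph{maps onto} $G$), and then show they are a \emph{defining} set by a counting argument against the lower $p$-series. Throughout let $F$ be the free pro-$p$ group on $X$ and $\theta\colon F\twoheadrightarrow G$ the continuous surjection extending $x_i\mapsto x_i$, which exists because $X$ topologically generates $G$.

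First I would set up coordinates of the first kind: since $G$ is uniform and $|X|=d=d(G)$, the map $\mathbb{Z}_p^{d}\to G$, $(\lambda_1,\dots,\lambda_d)\mapsto x_1^{\lambda_1}\cdots x_d^{\lambda_d}$, is a homeomorphism (see \cite{DidSMaSe99}), so each $g\in G$ has unique coordinates. Now every $[x_i,x_j]$ lies in $[G,G]\le G^{p}=P_2(G)$ (respectively $[G,G]\le G^{4}=P_3(G)$ when $p=2$), so its unique expression $x_1^{a_1(i,j)}\cdots x_d^{a_d(i,j)}$ must reduce to the identity modulo $P_2(G)$ (resp. $P_3(G)$). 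Since $G/P_2(G)\cong(\mathbb{Z}/p)^{d}$ and $G/P_3(G)\cong(\mathbb{Z}/4)^{d}$ (the latter abelian because $G$ is powerful), free of rank $d$ on the images of $X$, this forces $a_m(i,j)\in p\mathbb{Z}_p$ (resp. $a_m(i,j)\in 4\mathbb{Z}_2$). This both defines the exponents and shows that each relator $[x_i,x_j]x_1^{a_1(i,j)}\cdots x_d^{a_d(i,j)}$ lies in $\ker\theta$, so $\theta$ factors through $H:=\langle X;R\rangle$, yielding a continuous surjection $\bar\theta\colon H\twoheadrightarrow G$.

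The core of the argument is that $\bar\theta$ is injective, and the key intermediate claim is that $H$ itself is powerful. Here I would read the relators backwards: $[x_i,x_j]=x_d^{-a_d(i,j)}\cdots x_1^{-a_1(i,j)}$, and since each exponent lies in $p\mathbb{Z}_p$ (resp. $4\mathbb{Z}_2$) the factor $x_m^{-a_m(i,j)}=(x_m^{-a_m(i,j)/p})^{p}$ is a genuine $p$-th power (resp. $x_m^{-a_m(i,j)/4}$ raised to the fourth). Thus every $[x_i,x_j]$ lies in $\overline{H^{p}}$ (resp. $\overline{H^{4}}$), so the images $\bar x_i$ pairwise commute and topologically generate $H/\overline{H^{p}}$; as the set of commuting pairs is closed, $H/\overline{H^{p}}$ is abelian, i.e. $[H,H]\le\overline{H^{p}}$ and $H$ is powerful (the $p=2$ case is identical with $\overline{H^{4}}$). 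Note also $d(H)=d$: it is at most $d$ since $X$ generates $H$, and at least $d(G)=d$ since $\bar\theta$ is onto.

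Finally I would run the counting argument. Because $H$ is powerful with $d(H)=d$, the $p$-power map induces epimorphisms $P_i(H)/P_{i+1}(H)\twoheadrightarrow P_{i+1}(H)/P_{i+2}(H)$, whence $|P_{i+1}(H):P_{i+2}(H)|\le|H:P_2(H)|=p^{d}$ and therefore $|H:P_{n+1}(H)|\le p^{dn}$ for all $n$. On the other hand $G$ uniform gives $|G:P_{n+1}(G)|=p^{dn}$ exactly; and since the $P_{n+1}$ are characteristic and $\bar\theta$ is onto, $\bar\theta$ induces a surjection $H/P_{n+1}(H)\twoheadrightarrow G/P_{n+1}(G)$, so comparing orders forces it to be an isomorphism for every $n$. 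Passing to the inverse limit (using $\bigcap_n P_{n+1}(H)=1$ and the same for $G$) shows $\bar\theta$ is an isomorphism, so $R$ is a defining set of relations; as $R$ is finite, $G$ is finitely presented. The main obstacle is exactly the powerfulness of $H$: without it the estimate $|P_{i+1}(H):P_{i+2}(H)|\le p^{d}$ can fail, and it is precisely the divisibility $a_m(i,j)\in p\mathbb{Z}_p$ secured in the first step that turns the relators into products of $p$-th powers and so delivers powerfulness.
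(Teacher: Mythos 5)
The paper does not prove this proposition at all—it is imported verbatim as \cite[Proposition 4.32]{DidSMaSe99}—so there is no internal proof to compare against; your argument is correct and is essentially the standard proof from that reference (divisibility of the exponents via the coordinates of the first kind, powerfulness of the presented group $H$, and the order comparison $\lvert H : P_{n+1}(H)\rvert \le p^{dn} = \lvert G : P_{n+1}(G)\rvert$ forcing $\bar\theta$ to be an isomorphism). All the auxiliary facts you invoke (the product map $\mathbb{Z}_p^d \to G$ being a homeomorphism, $P_{n+1} = G^{p^n}$ for powerful groups, surjectivity of the $p$-power maps between consecutive lower-$p$-series quotients, and $\bigcap_n P_n = 1$ for finitely generated pro-$p$ groups) are indeed standard results in \cite{DidSMaSe99}, so the proof is complete.
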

\begin{proof}[Proof of Theorem \ref{MainTheorem}]
Let $m\geq 3$ be a positive integer. By Corollary \ref{PowerfulAlgebra}, we know that $p^2L_m(d)$ is a powerful $\mathbb{Z}_p$-Lie algebra of rank $m$ for all $d \in \mathbb{Z}_p^{*}$. By {\cite[Theorem 9.8]{DidSMaSe99}}, we can associate to $p^2L_m(d)$ a uniform pro-$p$ group $G_m(d)$ which has the same underlying set as $p^2L_m(d)$ and such that $d(G_m(d))=m$. By {\cite[Theorem 9.10]{DidSMaSe99}} it follows that $G_m(d) \cong G_m(l)$ if and only if $p^2L_m(d) \cong p^2L_m(l)$. Moreover, $G_m(d)$ and $G_m(l)$ are pairwise non-commensurable whenever $d \neq l$, since $p^2L_k(d)\otimes_{\mathbb{Z}_p}\mathbb{Q}_p \cong p^2L_k(l)\otimes_{\mathbb{Z}_p}\mathbb{Q}_p$ if and only if $d=l$. 

 Now by Corollary \ref{PowerfulAlgebra}, we have uncountably many uniform pro-$p$ groups $G$ such that $d(G)=m$ and, by Proposition \ref{FinitelyPresented}, all these groups are finitely presented. Moreover,  by {\cite[Theorem 4.35]{DidSMaSe99}}, the minimal number of relations of  $G_m(d)$ is ${m \choose 2}$.
\end{proof}
\begin{rem}  Note that one can give an explicit presentation for $G_m(d)$. For example, the uniform pro-$p$ group associated to the powerful $\mathbb{Z}_p$-Lie algebra $p^2L_4(d)$ is given by the presentation
\begin{displaymath}
G_4(d) = \langle y, z_2, z_3, z_4 ~ | ~ [z_2, z_3]=1, [z_3, z_4]=1,
\end{displaymath}
\begin{displaymath}
[z_4, z_2]=1, [z_2, y] = z_4^{dp^2}, [z_3, y] = z_3^{p^2}, [z_4, y] = z_2^{p^2}  \rangle.
\end{displaymath}
\end{rem}

\bigskip

\ackn  I am grateful to Slobodan Tanusevski for his valuable comments.

\bibliographystyle{plain}

\end{document}